\documentclass[10pt]{amsart}

\usepackage{amssymb, tikz}

\usepackage{hyperref}

\usepackage{amsmath}

\newtheorem{theorem}{Theorem}[section]

\newtheorem{lemma}[theorem]{Lemma}

\newtheorem{question}[theorem]{Question}
\newtheorem{proposition}[theorem]{Proposition}
\newtheorem{corollary}[theorem]{Corollary}
\newtheorem{claim}[theorem]{Claim}

\theoremstyle{definition}
\newtheorem{definition}[theorem]{Definition}

\newcommand{\ga}{\alpha}

\renewcommand{\gg}{\gamma}
\newcommand{\gd}{\delta}
\newcommand{\gw}{\omega}

\newcommand{\supp}{\mathrm{supp}}
\newcommand{\dom}{\mathrm{dom}}
\renewcommand{\restriction}{\mathord{\upharpoonright}}

\title{On the consistency strength of $\textsf{MM}(\omega_1)$}

\author[Dobrinen]{Natasha Dobrinen}
\address{Natasha Dobrinen, Department of Mathematics, University of Notre Dame, Notre Dame, IN 46556, USA}

\email{ndobrine@nd.edu}

\author[Krueger]{John Krueger}
\address{John Krueger, Department of Mathematics, University of North Texas, Denton, TX 76203, USA}

\email{jkrueger@unt.edu}

\author[Marun]{Pedro Marun}
\address{Pedro Marun, Department of Mathematical Sciences, Carnegie Mellon University,
Pittsburgh, PA, 15213, USA}

\email{pmarun@andrew.cmu.edu}

\author[Mota]{Miguel Angel Mota}

\address{Miguel Angel Mota,  Departamento de Matem\'aticas,
ITAM,
01080, Mexico City, Mexico}

\email{motagaytan@gmail.com}

\author[Zapletal]{Jindrich Zapletal}
\address{Jindrich Zapletal, Department of Mathematics, University of Florida, Gainesville, FL 32611, USA}

\email{zapletal@ufl.edu}

\thanks{This paper was originally conceived during the workshop \emph{From $\aleph_2$ to Infinity} sponsored by the American Institute of Mathematics and the National Science Foundation. We thank James Cummings, Itay Neeman, and Dima Sinapova for having organized that meeting which took place on May 15-19, 2023 at the AIM Institute in San Jose, California.}

\thanks{Natasha Dobrinen was supported by the National Science Foundation under grant NSF DMS-1901753. John Krueger was supported by the Simons Foundation under Grant 631279. }
\date{}

\begin{document}

\subjclass[2020]{03E35, 03E50.}

\begin{abstract}
We prove that the consistency of Martin's Maximum restricted to partial orders of cardinality $\omega_1$ follows from the consistency of $\textsf{ZFC}$.
\end{abstract}

\maketitle
\pagestyle{myheadings}\markright{On the consistency strength of $\textsf{MM}(\omega_1)$}

\section{Introduction}

Given the profusion of independence results which followed Cohen's discovery of the method of forcing, it has become a major objective of set theory to find natural axiomatic extensions of \textsf{ZFC} which decide Cantor's Continuum Problem as well as other important questions undecidable in \textsf{ZFC}. For example, in the last five decades forcing axioms have been widely studied and shown to have very interesting consequences regarding the continuum. Intuitively, the idea behind them is that the universe of set theory must be somehow saturated under forcing. More precisely, given a class $\Gamma$ of partial orders and a cardinal $\kappa$, the \emph{forcing axiom for $\Gamma$ and $\kappa$}, $\textsf{FA}(\Gamma, \kappa)$, is the assertion that for every $P\in\Gamma$ and every collection $\mathcal D$ of size at most $\kappa$ consisting of
dense subsets of $P$, there is a filter $G\subset P$ such that $G\cap D\neq\emptyset$ for every $D\in\mathcal D$.

Some forcing axioms which are especially significant for their wide range of applications not only in set theory, but also in algebra, analysis, topology, and other fields, are Martin's Axiom for $\omega_1$-many dense sets ($\textsf{MA}_{\omega_1}$)
introduced by Martin, Solovay and Tennenbaum in the mid 1960's, the Proper Forcing Axiom ($\textsf{PFA}$) introduced by Baumgartner and Shelah in the early 1980's, and the Semiproper Forcing Axiom ($\textsf{SPFA}$) and Martin's Maximum ($\textsf{MM}$) introduced by Foreman, Magidor and Shelah in the mid 1980's. They are defined as $\textsf{FA}(\Gamma, \omega_1)$ for $\Gamma$ being, respectively, the class of all posets with the countable chain condition, the class of all proper posets, the class of all semiproper posets, and the class of all posets preserving stationary subsets of $\omega_1$, where these four classes are being presented in increasing order.

The forcing axioms $\textsf{MA}_{\omega_1}$, $\textsf{PFA}$ and $\textsf{SPFA}$ are known to be relatively consistent (from  \textsf{ZFC} in the first case and modulo large cardinals in the other two) by means of forcing iterations which fall in the same class $\Gamma$ being considered. So, this kind of construction  heavily depends on certain preservation criteria. One of them is the central theorem of Shelah stating that if $P_{\alpha}$ is a countable support forcing iteration of $\{\dot{Q}_\beta \colon \beta< \alpha\}$ such that every $\dot{Q}_\beta$ is a proper forcing notion in $V^{P_\alpha\restriction \beta}$, then $P_{\alpha}$ is proper
(in particular, $P_{\alpha}$ does not collapse $\omega_1$). Another one, also due to Shelah, holds in the context of revised countable support forcing iterations and semiproper forcings. On the other hand, there is no such preservation result for stationary set preserving posets and the classical argument for the consistency of $\textsf{MM}$ (see \cite{FMS} and \cite{SEMIPROPER}) goes in a slightly different way: it passes by showing that $\textsf{SPFA}$ implies that every stationary set preserving notion of forcing is semiproper, which in turn implies the equivalence between $\textsf{SPFA}$ and $\textsf{MM}$.

Let us denote by $\textsf{PFA}(\omega_1)$ and
$\textsf{MM}(\omega_1)$ the respective restrictions of 
$\textsf{PFA}$ and $\textsf{MM}$  to posets of cardinality $\omega_1$.
So, $\textsf{PFA}(\omega_1)$ and $\textsf{MM}(\omega_1)$
are defined as $\textsf{FA}(\Gamma, \omega_1)$, for $\Gamma$ being the class of all posets of cardinality $\omega_1$ which are proper in the first case,
and the class of all posets of cardinality $\omega_1$
which preserve stationary subsets of $\omega_1$ in the second case.
It is well-known that $\textsf{ZFC}$ and $\textsf{ZFC}+\textsf{PFA}(\omega_1)$ are equiconsistent, which follows from the fact that under \textsf{CH},
forcings of size $\omega_1$
can be iterated with countable support
up to length $\omega_2$ with an $\omega_2$-c.c.\ forcing iteration
(see Lemmas 2.4 and 2.5 of Chapter VIII of \cite{PROPER}). Shelah proved that $\textsf{ZFC} \, +$ ``there exists a strongly inaccessible cardinal''
implies the consistency of $\textsf{ZFC}+\textsf{MM}(\omega_1)$ (see Theorem 4.3 of Chapter III
of \cite{PROPER}).
The main theorem of this article states that Shelah's inaccessible can be removed from
this consistency result.

\begin{theorem}\label{mainthm-0-intro}
Assume $\textsf{CH}$ and  $2^{\omega_1}=\omega_2$. Then there is a countable support forcing iteration $P_{\omega_2}$ of $\{\dot{Q}_\beta \colon \beta< \omega_2\}$ with the following properties:
\begin{enumerate}
\item Every $\dot{Q}_\beta$ is, in $V^{P_{\omega_2}\restriction \beta}$, a proper poset;
\item $P_{\omega_2}$ is proper and has the $\omega_2$-chain condition;
\item $P_{\omega_2}$ forces $\textsf{MM}(\omega_1)$.
\end{enumerate}
Consequently, the theories $\textsf{ZFC}$ and $\textsf{ZFC}+\textsf{MM}(\omega_1)$ are 
equiconsistent.
\end{theorem}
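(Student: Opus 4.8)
The plan is to build $P_{\omega_2}$ as a countable support iteration in which each stage $\beta$ disposes of a single instance of $\textsf{MM}(\omega_1)$. Using $\textsf{CH}$ and $2^{\omega_1}=\omega_2$, I would first fix a bookkeeping function that, over the $\omega_2$ stages, enumerates all $P_\beta$-names for pairs $(\dot R,\dot{\mathcal D})$, where $\dot R$ is forced to be a stationary set preserving poset of cardinality $\omega_1$ and $\dot{\mathcal D}$ a family of $\omega_1$ dense subsets of $\dot R$. Since such a poset is coded by a subset of $\omega_1$ and, under $\textsf{CH}$, there are only $\omega_2$ nice names for such objects, the enumeration can be arranged so that every instance occurring in any intermediate model is listed cofinally often. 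Granting the central lemma below, at stage $\beta$ I let $\dot Q_\beta$ name the proper poset it produces for the $\beta$-th instance, so that clause (1) holds by fiat. Clause (2) is then automatic: properness of $P_{\omega_2}$ is Shelah's preservation theorem for countable support iterations of proper forcings, and the $\omega_2$-chain condition is the usual counting argument under $\textsf{CH}$ for iterations whose iterands have cardinality $\omega_1$ (Lemmas 2.4 and 2.5 of Chapter VIII of \cite{PROPER}), exactly as for $\textsf{PFA}(\omega_1)$.

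The heart of the matter is the following lemma, which is where the restriction to cardinality $\omega_1$ is essential and where no large cardinal is required: if $R$ is a stationary set preserving poset of cardinality $\omega_1$ and $\mathcal D=\{D_\xi:\xi<\omega_1\}$ is a family of dense subsets of $R$, then there is a proper poset $Q$ of cardinality $\omega_1$ adding a filter $H\subseteq R$ with $H\cap D_\xi\neq\emptyset$ for every $\xi<\omega_1$. One natural construction is a forcing with countable models as side conditions. Coding $R$ and $\langle D_\xi:\xi<\omega_1\rangle$ into a structure $\mathfrak A$ on $\omega_1$, a condition is a working part $p\in R$ together with a finite $\in$-increasing chain of countable elementary submodels of $\mathfrak A$, subject to the demand that $p$ be semigeneric over each model of the chain. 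Working with submodels of $\mathfrak A$ rather than of $H(\omega_2)$ is what keeps $|Q|\le\omega_1$, since under $\textsf{CH}$ there are only $\omega_1^{\omega}=\omega_1$ countable subsets of $\omega_1$.

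Both the requirement that the generic filter meet every $D_\xi$ and the properness of $Q$ reduce to a single extension lemma: given a condition together with either a target $D_\xi$ or a further model to be inserted into the chain, one must strengthen the working part so that it enters $D_\xi$, respectively becomes semigeneric over the new model, while remaining semigeneric over the models already present. This is exactly where stationary set preservation of $R$ is used, as it is precisely the statement that semigeneric conditions below a given element of $R$ exist. The main obstacle---and, I expect, the genuine content of the paper---is that stationary set preservation supplies such conditions only over a \emph{stationary} family of models, whereas properness of $Q$ is ostensibly a demand over \emph{club}-many models. Bridging this gap, by confining the side-condition models to the stationary family of good models and establishing preservation of $\omega_1$ through the stationarity of that family rather than through club-many generic conditions, is the delicate step; it is exactly the point at which Shelah's proof instead appealed to an inaccessible cardinal.

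Finally, to verify that $P_{\omega_2}$ forces $\textsf{MM}(\omega_1)$, work in the extension $V[G]$ and take any stationary set preserving $R$ of cardinality $\omega_1$ with $\omega_1$ dense sets $\mathcal D$. By the $\omega_2$-chain condition both objects already lie in some $V[G_\beta]$ with $\beta<\omega_2$, and a standard reflection argument shows the instance is addressed at some stage $\gamma\geq\beta$, whose iterand adds a filter meeting every member of $\mathcal D$; being a filter that meets fixed dense sets, it survives to $V[G]$. This reflection requires that $R$ still be stationary set preserving in $V[G_\gamma]$, which---because the tail of the iteration is proper---should follow from an absoluteness lemma for stationary set preservation of size-$\omega_1$ posets; this is subtler than the corresponding fact used for $\textsf{PFA}(\omega_1)$ and deserves to be isolated separately. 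The equiconsistency is then immediate: any model of $\textsf{ZFC}+\textsf{MM}(\omega_1)$ is a fortiori a model of $\textsf{ZFC}$, while for the converse one begins in $L$, where $\textsf{CH}$ and $2^{\omega_1}=\omega_2$ hold, and applies the construction to produce a model of $\textsf{ZFC}+\textsf{MM}(\omega_1)$.
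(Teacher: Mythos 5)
Your outline hinges entirely on the ``central lemma'': that for every stationary set preserving poset $R$ of cardinality $\omega_1$ and every family of $\omega_1$ dense subsets of $R$ there is a \emph{proper} poset of cardinality $\omega_1$ adding a filter meeting them all. You do not prove this, and you yourself flag the obstruction --- that stationary set preservation supplies semigeneric conditions only over a stationary rather than club family of models --- as ``the delicate step.'' That step is the entire content of the theorem, so what you have is a plan with the load-bearing part missing. Moreover, your identification of stationary set preservation with ``semigeneric conditions below a given element of $R$ exist'' is not correct: that property is semiproperness, and for posets of size $\omega_1$ semiproper is equivalent to proper (as the paper observes in Section 2). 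So if your extension lemma went through for enough models you would in effect be proving $R$ proper, and the side-condition apparatus would be superfluous; and Section 2 of the paper shows it is consistent that there is a stationary set preserving poset of size $\omega_1$ that is \emph{not} proper (a generic Kurepa tree), which is exactly the case your construction cannot handle.

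The paper's route is genuinely different and avoids your lemma altogether. At stage $\beta$, if the bookkeeping hands over a name for a poset that is proper in $V^{P_{\omega_2}\restriction\beta}$, one forces with it and meets the dense sets exactly as in the $\textsf{PFA}(\omega_1)$ argument; if the named poset is not proper there, one instead forces with a fixed auxiliary poset $Q$ of Theorem~\ref{posetQ} --- proper, countably distributive, of size $2^{\omega_1}$, with the $\omega_2$-p.i.c.\ (which replaces your counting argument for the $\omega_2$-chain condition, since $Q$ has size $\omega_2$, not $\omega_1$) --- which adds a continuous $V$-reflection sequence. Proposition~\ref{reflection} then shows that every ground-model non-proper poset of size $\omega_1$ fails to preserve some stationary subset of $\omega_1$ in the extension, and this failure persists through the proper tail of the iteration because the witnessing stationary set remains stationary. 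Hence in the final model the instance of $\textsf{MM}(\omega_1)$ for such a poset is vacuous: rather than realizing the forcing axiom for the hard posets, the paper ejects them from the class the axiom quantifies over. Note also that your closing worry about an absoluteness lemma is inverted: what is needed, and what holds, is that the \emph{failure} of stationary set preservation persists under the proper tail, not that stationary set preservation itself does.
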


We would like to thank the referee for pointing out an error in an earlier 
version of this article.

\section{Stationary Set Preserving But Not Proper}

Before proving the main theorem,
we give a brief sketch of the consistency that there exists a
forcing poset of size $\omega_1$ which preserves stationary subsets of
$\omega_1$ but is not proper.
This fact means that the work done in the next section for destroying stationary
set preserving posets which are not proper is not vacuous.
Another proof was given previously
by Sakai \cite{Sakai}, who introduced a new combinatorial principle called
$\diamondsuit^{++}$ and showed that it
implies the existence of a non-proper poset of size $\omega_1$ preserving stationary subsets
of $\omega_1$.
Since $\diamondsuit^{++}$ is forceable and holds in $L$, the existence of such a poset is 
consistent with $\textsf{ZFC}$.
Similar to the construction in the present section, Sakai's forcing is actually a Kurepa tree,
although it is not presented as such.

It is worth pointing out in this context that a poset of size $\omega_1$ is proper
iff it is semiproper.
Namely, if $Q$ is a poset of size $\omega_1$ and $N$ is a countable elementary submodel,
then by a straightforward argument any condition in $Q$ is a master condition for $N$
iff it is a semi-master condition for $N$.
Thus, we are actually proving the consistency of the
existence of a poset of size $\omega_1$
which preserves stationary sets and is not semiproper.

The forcing for introducing such a poset is the standard forcing for adding
a Kurepa tree, and the poset which is stationary set preserving but not proper
in the generic extension is the generic Kurepa tree.
For any ordinal $\alpha$,
let $P(\alpha)$ be the poset consisting of conditions which are pairs $(s,f)$, where
$s$ is a countable tree of height a countable successor ordinal satisfying
the usual normality properties, and $f$ is a function whose domain is a countable
subset of $\alpha$ so that for all $\gamma \in \dom(f)$, $f(\gamma)$ is
an element of the top level of $s$.
Let $(t,g) \le (s,f)$ if $t$ end-extends $s$, $\dom(f) \subset \dom(g)$,
and for all $\gamma \in \dom(f)$, $f(\gamma) \le_t g(\gamma)$.

The basic properties of $P(\alpha)$ are as follows.
\begin{enumerate}
	\item $P(\alpha)$ is countably closed and, assuming \textsf{CH}, $\omega_2$-c.c.
	\item Assume that $G$ is a generic filter on $P(\alpha)$.
	The union of the trees $s$, where $(s,f) \in G$ for some $f$, is a normal
	$\omega_1$-tree which we will denote by $T$, and the canonical name by $\dot T$.
	\item For each $i < \alpha$,
	the downwards closure
	of $\{ f(i) : \exists s \ (s,f) \in G, \ i \in \dom(f) \}$
	is a cofinal branch of $T$, which we will denote by $b_i$, and the canonical
	name by $\dot b_i$.
	\item For all distinct $i, j < \alpha$, the set of $(s,f) \in P(\alpha)$ such that
	$i, j \in \dom(f)$ and $f(i) \ne f(j)$ is dense open.
	Hence, $b_i \ne b_j$ in $V[G]$.
	In particular, if $\alpha \ge \omega_2$ then $T$ is a Kurepa tree in $V[G]$.
\end{enumerate}

We also note that if $\alpha < \beta$ then $P(\alpha)$ is a regular suborder of $P(\beta)$,
and $P(\alpha+1)$ is forcing equivalent to $P(\alpha) * \dot T$. 
In fact, if $G$ is a generic filter on $P(\alpha)$ and $T = \dot T^G$, then 
a branch $b$ through $T$ is $V[G]$-generic on $T$ iff the set 
$\{ (s,f) \in P(\alpha+1) / G : f(\alpha) \in b \}$ is $V[G]$-generic 
on $P(\alpha+1) / G$.

The next two lemmas complete the proof.

\begin{lemma}
	The poset $P(\omega_2)$ forces that $\dot T$ is not proper.
\end{lemma}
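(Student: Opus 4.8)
The plan is to work in the extension $V[G]$ by $P(\omega_2)$, where $T=\dot T^G$ is the generic Kurepa tree with its cofinal branches $b_i$ ($i<\omega_2$), and to show that $T$, regarded as a forcing with nodes ordered by $\le_T$, fails to have master conditions for a stationary family of countable elementary submodels. To disprove properness it suffices to exhibit stationarily many countable $N^*\prec H_\theta^{V[G]}$ with $T\in N^*$ for which the root of $T$ has no $(N^*,T)$-generic extension. First I would record the standard characterization of genericity for tree forcings: if $\delta=N^*\cap\omega_1$, then a node $x$ is $(N^*,T)$-generic if and only if its level is at least $\delta$ and the branch $b_x=\{y:y<_T x\}$ meets every dense subset of $T$ that belongs to $N^*$; moreover, passing to the unique predecessor of $x$ at level $\delta$, we may always assume $x$ lies at level exactly $\delta$. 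Thus master conditions below the root exist if and only if some node $x\in T_\delta$ has $b_x$ an $N^*$-generic branch through $T\restriction\delta$, and the whole task reduces to showing that, stationarily often, \emph{no} level-$\delta$ node carries an $N^*$-generic branch.

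The key preliminary observation is that the branches indexed inside $N^*$ never give master conditions. Indeed, suppose $k\in N^*\cap\omega_2$, so that $b_k\in N^*$. Since $T$ is normal and splitting, the complement $T\setminus b_k$ is a dense subset of $T$: any node off $b_k$ already lies in it, and any node on $b_k$ has at least two immediate successors, one of which leaves $b_k$. As $b_k\in N^*$, we have $T\setminus b_k\in N^*$, and clearly $b_k\cap(T\setminus b_k)=\emptyset$. Hence $b_k\restriction\delta$ fails to meet a dense set of $N^*$ and is not $N^*$-generic. Consequently any would-be master condition $x\in T_\delta$ must satisfy $x\notin b_k$ for every $k\in N^*\cap\omega_2$; that is, $x$ must be an ``exotic'' level-$\delta$ node, lying on no branch whose index belongs to $N^*$.

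It remains to eliminate these exotic nodes, and this is where the main work lies. I would prove that for stationarily many $N^*$ one has $T_\delta=\{b_k(\delta):k\in N^*\cap\omega_2\}$, where $b_k(\delta)$ denotes the node of $b_k$ at level $\delta$; combined with the previous paragraph, this immediately yields that no node of $T_\delta$ is $(N^*,T)$-generic, so the root has no master condition and $T$ is not proper. To produce these submodels I would start from a club of countable $N\prec H_\theta^{V}$ in the ground model containing $P(\omega_2)$ and $\langle\dot b_i:i<\omega_2\rangle$, choose $G$ so that $G$ is $(N,P(\omega_2))$-generic (possible for stationarily many $N$ because $P(\omega_2)$ is countably closed), and set $N^*=N[G]$, which then satisfies $N^*\prec H_\theta^{V[G]}$ and $N^*\cap\omega_1=N\cap\omega_1=\delta$; the resulting family $\{N^*\}$ is stationary in $V[G]$.

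The heart of the argument, and the step I expect to be the main obstacle, is the capture claim $T_\delta\subseteq\{b_k(\delta):k\in N^*\cap\omega_2\}$. Here I would use that every node of $T$ lies on some generic branch: given a node $t$ and any index $i$ fresh for a condition with $t$ in its tree part, the condition assigning $f(i)=t$ forces $t\in b_i$, so the relevant set is dense. The difficulty is that a level-$\delta$ node is only witnessed by a condition of height exceeding $\delta$, which cannot belong to $N^*$, so the routing index is not automatically in $N^*$; the task is to reflect, for each level-$\delta$ node $y$, a branch index through $y$ down into $N^*$. I would attempt this by a pressing-down analysis of the minimal index $k(y)=\min\{i:y\in b_i\}$ together with the normality of the tree being built: at limit level $\delta$ the realized branches through $T\restriction\delta$ are exactly those forced to be capped, and $(N,P(\omega_2))$-genericity together with elementarity should force every such capped branch to already be assigned an index from $N\cap\omega_2$. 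Equivalently, and perhaps more transparently, one can phrase the conclusion as the failure of $T$ to preserve a stationary subset of $[\omega_2]^\omega$: the generic branch $c$ produces at each $\delta$ a node $c(\delta)$ whose least branch index escapes $N^*$, a regressive obstruction no proper forcing could create. Getting this reflection to go through cleanly for stationarily many $\delta$ is the crux of the proof.
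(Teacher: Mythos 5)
Your overall skeleton is the same as the paper's: find stationarily many $N^*=N[G]$ such that every node of $T$ at level $\delta=N^*\cap\omega_1$ lies on a branch $b_k$ with $k\in N^*$ (your ``capture claim''), and then argue that no such node can be an $(N^*,T)$-generic condition. Your elimination step is correct and in fact simpler than the paper's: since $b_k\in N[G]$ for $k\in N\cap\omega_2$, and $T\setminus b_k$ is dense open in $T$, a node of $T_\delta$ lying on $b_k$ has all of its predecessors inside $b_k$ and hence cannot be $(N^*,T)$-generic. (The paper instead passes to the two-step iteration $P(\omega_2)*\dot T\cong P(\omega_2+1)$ and derives a contradiction from the dense set forcing the branches at coordinates $k$ and $\omega_2$ to diverge below $\delta$; your route avoids that detour.)

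The gap is exactly where you located it: the capture claim. It does not follow from $G$ merely being $(N,P(\omega_2))$-generic, and no pressing-down analysis of minimal indices will rescue it. The point is that $T_\delta$ is the top level of the first condition $(s,f)\in G$ whose tree has height greater than $\delta$; such a condition must cap every branch of $s^*=\bigcup\{s_p:p\in G\cap N\}$ of the form $\{f_p(\gamma):p\in G\cap N,\ \gamma\in\dom(f_p)\}$ with $\gamma\in N\cap\omega_2$, but nothing prevents it from \emph{also} capping additional cofinal branches of the countable tree $s^*$ (of which there may be continuum many) and placing those caps at level $\delta$ while assigning them no index, or an index outside $N$. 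Whether this happens depends on $G$, not on $N$-genericity, so your proposed stationary family is too large. The fix --- and this is what the paper does --- is to build the capture property directly into the master condition in the ground model: given $N$ and $p\in N\cap P(\omega_2)$, construct an $(N,P(\omega_2))$-generic $q=(s_N,f_N)\le p$ whose tree's top level consists exactly of the caps of the indexed branches, so that $q$ forces every node of $\dot T$ at height $N\cap\omega_1$ to equal $f_N(\gamma)$ for some $\gamma\in\dom(f_N)\cap N$. Since such conditions exist below every $p\in N\cap P(\omega_2)$, the set of $N[G]$ for which $G$ contains one is stationary in $[H(\lambda)]^\omega$ of $V[G]$, and on that stationary set your elimination argument does finish the proof.
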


\begin{proof}
	Fix a large enough regular cardinal $\lambda$.
	Let $G$ be a generic filter on $P(\omega_2)$.
	In $V[G]$, let $\mathcal X$ denote the collection of all countable $M \prec H(\lambda)$
	satisfying: (a) $M = N[G]$ for some countable $N \prec H(\lambda)^V$,
	(b) there exists a master condition $q_N = (s_N,f_N) \in G$ for $N$, and
	(c) for all
	$a \in T$ of height $N \cap \omega_1$, there exists some
	$\gamma \in \dom(f_N) \cap N$ such that
	$f_N(\gamma) = a$.

	Working in $V$,
	for any countable elementary submodel $N \prec H(\lambda)^V$ and
	$p \in N \cap P(\omega_2)$, it is straightforward to build a master condition $q \le p$
	for $N$ forcing that property (c) holds for $\dot T$ in place of $T$. 
	It follows that in $V[G]$, $\mathcal X$ is stationary in $[H(\lambda)]^\omega$.
	Let $M = N[G] \in \mathcal X$ and we claim that there does not exist
	a master condition in $T$ for $M$.
	Note that an element $a$ of $T$ is a master condition for $N[G]$ iff for any dense open
	set $D \subset T$ in $N$, there is some $x <_T a$ such that $x \in D \cap N$.
	Suppose for a contradiction that $a$ is such an element.
	By dropping $a$ down to the height $N \cap \omega_1$ if necessary, assume
	without loss of generality that $a$ has height $N \cap \omega_1$.
	By the choice of $q_N$, fix $\gamma \in \dom(f_N) \cap N$ such that
	$f_N(\gamma) = a$.
	Since $q_N \in G$, it follows that $a \in b_\gamma$.
	
	Let $\dot a$ be a name which is forced by some $r \le q_N$ in $G$
	to be a master condition in $\dot T$ for $N[G]$, has height $N \cap \omega_1$,
	and is in $\dot b_\gamma$. 	
	Then $r * \dot a$ is a master condition in $P(\omega_2) * \dot T$ for $N$.
	But by property (4) of $P(\omega_2+1)$ the generic branches at coordinates
	$\gamma$ and $\omega_2$ must diverge.
	Since the dense set described in property (4) is in $N$ and $P(\omega_2+1)$ and
	$P(\omega_2) * \dot T$ are forcing equivalent, the fact that
	$r * \dot a$ is a master condition for $N$ implies that the branches with
	coordinates $\gamma$ and $\omega_2$ must
	diverge below height $N \cap \omega_1$.
	Hence, $r * \dot a$ forces that $a$ is not in $\dot b_\gamma$, which contradicts
	that $a \in b_\gamma$.
\end{proof}

\begin{lemma}
	The poset $P(\omega_2)$ forces that $\dot T$ preserves stationary subsets of
	$\omega_1$.
\end{lemma}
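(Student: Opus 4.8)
The plan is to show that the generic Kurepa tree $T$, viewed as a forcing poset in $V[G]$, preserves stationary subsets of $\omega_1$. Recall that a tree $T$ as a forcing poset adjoins a cofinal branch, and $T$ preserves stationary sets iff for every name $\dot E$ for a club in $\omega_1$ and every stationary $S \subseteq \omega_1$, it is forced that $\dot E \cap S \ne \emptyset$. Equivalently, I would argue directly: suppose toward a contradiction that in $V[G]$ there is a node $a \in T$, a $T$-name $\dot C$ for a club subset of $\omega_1$, and a stationary set $S \in V[G]$ such that $a$ forces $\dot C \cap S = \emptyset$. The goal is to derive a contradiction by finding a node below which some $\alpha \in S$ is forced into $\dot C$.

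The key idea is to absorb the tree forcing into the ground iteration and use properness of $P(\omega_2)$ (which is countably closed, hence proper). Since $P(\alpha+1)$ is forcing equivalent to $P(\alpha) * \dot T$ for the $T$ arising at stage $\alpha$, and more relevantly, since $T$ is (by the divergence property (4)) essentially the forcing to add one more generic branch, one can view $T$ itself as a regular suborder absorbed into a larger copy of the Kurepa forcing. Concretely, I would fix a name and a stationary $S$, and work with a countable elementary submodel $N \prec H(\lambda)$ containing all relevant data, with $\delta = N \cap \omega_1 \in S$ (possible since $S$ is stationary). Since $P(\omega_2)$ is countably closed, I can build a master condition for $N$; the crucial point is that a generic branch through $T$ can be arranged to pass through a node of height $\delta$ that is itself a master-type condition for the tree forcing relative to $N[G]$, so that the club $\dot C$ named inside $N[G]$ will contain $\delta$.

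More precisely, the heart of the argument is to show that for a club of $\delta = N\cap\omega_1$ with $\delta \in S$, there is a node $a \in T$ of height $\delta$ which is $(N[G], T)$-master: that is, for every dense open $D \subseteq T$ with $D \in N[G]$, there is $x <_T a$ with $x \in D \cap N[G]$. Below such a master node $a$, any $T$-name for a club computed in $N[G]$ is forced to have $\delta$ as a limit point, hence $\delta \in \dot C$; since $\delta \in S$, this contradicts $a \Vdash \dot C \cap S = \emptyset$. The existence of such master nodes of height $\delta$ for stationarily many $\delta$ follows by reflecting the construction from the previous lemma: using that in $V$ one can build, over any countable $N \prec H(\lambda)^V$ and any condition $p \in N$, a master condition $q \le p$ whose tree part $s_N$ has top level of height $N \cap \omega_1$ and realizes (via the $f_N$ coordinates indexed inside $N$) every desired node at that height, so that the generic $T$ acquires a node of height $\delta$ that is generic over $N[G]$.

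The main obstacle I anticipate is verifying the master-node existence claim with sufficient genericity: one must ensure that the node $a$ of height $\delta$ not only lies in $T$ but is generic for the tree forcing over $N[G]$, meaning it meets every dense open subset of $T$ lying in $N[G]$, \emph{and} that such nodes exist for a set of $\delta$ that remains stationary in $S$. This requires carefully interleaving the countable-closure of $P(\omega_2)$ with a fusion-style construction inside $N$, diagonalizing against the dense open sets of $\dot T$ named in $N$ so that the master condition $q_N$ forces a height-$\delta$ node to be $(N[G],\dot T)$-generic. Once this is established the stationary-preservation follows, since a $T$-generic branch through such a node meets every dense set coded in $N[G]$, forcing the club named in $N[G]$ to reflect to $\delta \in S$.
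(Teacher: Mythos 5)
There is a genuine gap, and it sits exactly where you flagged your ``main obstacle.'' Your argument needs a master node $a \in T$ of height $\delta$ over a model $N[G]$ with $\delta = N \cap \omega_1 \in S$, and you assert this is available ``for a club of $\delta$.'' That claim is not established and is in serious tension with the preceding lemma: that lemma produces a \emph{stationary} set $\mathcal X$ of models $N[G]$ for which \emph{no} master node exists (those where a condition $q_N \in G$ enumerates, via $f_N$ on coordinates in $N$, every node of height $N\cap\omega_1$). The fusion construction you describe (building $q_N$ in $V$ together with a distinguished branch meeting every dense subset of $\dot T$ named in $N$) is fine, but it only shows that the set of models admitting master nodes is \emph{stationary} in $[H(\lambda)]^\omega$, not club. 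Two stationary sets need not intersect, so there is no way to guarantee that a model admitting a master node also satisfies $N \cap \omega_1 \in S$. The order of quantifiers is the real problem: $S$ lives in $V[G]$, so you can only choose $N$ with $N \cap \omega_1 \in S$ \emph{after} $G$ is fixed, but by then you can no longer arrange that the particular fusion condition $q_N$ witnessing your master node lies in $G$. This is precisely the obstruction that makes stationary-set preservation strictly weaker than properness, and the first lemma shows it is a genuine obstruction for this tree, not a technicality.

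The paper avoids master conditions entirely, via the absorption idea you mention at the start but then abandon. Given $x \Vdash_T \check S \cap \dot C = \emptyset$ in $V[G]$, the $\omega_2$-c.c.\ lets one find $\gamma < \omega_2$ with $S$ and $\dot C$ in $V[G \restriction \gamma]$ and with $x \le_t g(\gamma)$ for some $(t,g) \in G$. Factoring $P(\omega_2)$ as $P(\gamma) * \dot T * P(\omega_2)/\dot G_{\gamma+1}$, the branch $b_\gamma$ is a $V[G\restriction\gamma]$-generic branch of $T$ through $x$, so $\dot C^{b_\gamma}$ is a genuine club of $\omega_1$ disjoint from $S$ and it already lies in $V[G]$ --- contradicting the stationarity of $S$ in $V[G]$ with no genericity-over-$N[G]$ analysis at all. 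If you want to salvage your write-up, replace the master-node machinery with this factorization step; as written, the argument does not close.
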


\begin{proof}
	Suppose for a contradiction that for some generic filter $G$ on $P(\omega_2)$,
	in $V[G]$ there exists	a stationary set $S \subset \omega_1$,
	a $T$-name $\dot C$ for a club subset
	of $\omega_1$, and some $x \in T$ such that
	$x \Vdash^{V[G]}_T \check S \cap \dot C = \emptyset$.
	Since $P(\omega_2)$ is $\omega_2$-c.c., using nice names and a density argument
	we can find some $\gamma < \omega_2$ such that
	$S = \dot S^{G \restriction \gamma}$ for some
	$P(\gamma)$-name $\dot S$, the $T$-name $\dot C$ is in $V[G \restriction \gamma]$,
	and there is a condition $(t,g) \in G$ such that $x \le_t g(\gamma)$.
	
	Now the forcing $P(\omega_2)$ can be factored as
	$P(\gamma) * \dot T * P(\omega_2) / \dot G_{\gamma+1}$, and we can
	correspondingly write $V[G] = V[G \restriction \gamma][b_\gamma][H]$ for some $H$,
	where $x \in b_\gamma$ since $(t,g) \in G$.
	By an absoluteness argument, $x \Vdash^{V[G_\gamma]}_T \check S \cap \dot C = \emptyset$.
	Hence, $S$ is disjoint from
	the club $\dot C^{b_\gamma}$ in $V[G_{\gamma+1}]$. 
	But then $\dot C^{b_\gamma} \in V[G]$, which contradicts that $S$
	is stationary in $V[G]$.
\end{proof}

\section{Proving the Main Theorem}

The forcing iteration described in Theorem 1.1 will involve forcing two types of posets:
(1) proper posets of size $\omega_1$, bookkeeping so that all such posets in the final model
will have been forced with $\omega_2$-many times in the iteration, and
(2) forcing notions which destroy posets which are stationary set preserving but not proper.

In order to prove that the forcing iteration is $\omega_2$-c.c., we will use a property
introduced by Shelah (see Chapter VIII Section 2 of \cite{PROPER}).

\begin{definition}
A poset $R$ satisfies the
\emph{$\omega_2$-properness isomorphism condition} (\emph{$\omega_2$-p.i.c.}\ for short) if and only if for every large enough regular cardinal $\theta$, for every well-ordering $<$ of $H_{\theta}$ and for all ordinals $\alpha < \beta < \omega_2$ the following holds: if $N_{\alpha}$ and $N_{\beta}$ are countable elementary submodels of $( H_{\theta}, \in , <, R)$
such that $\alpha \in N_{\alpha}$, $\beta \in N_{\beta}$, $N_{\alpha} \cap \omega_2 \subset \beta$, $N_{\alpha} \cap \alpha =N_{\beta} \cap \beta$, $p \in N_{\alpha} \cap R$ and $\pi : N_{\alpha} \to N_{\beta}$ is an isomorphism satisfying $\pi(\alpha)=\beta$ and $\pi  \restriction( N_{\alpha} \cap N_\beta)= id$, then there exists a master condition $q$ for $N_\alpha$,
extending $p$ and $\pi(p)$, such that
$$q \Vdash_R \pi`` (\dot{G} \cap \check N_\alpha) = \dot G \cap \check N_\beta.$$
\end{definition}

Every proper poset of size $\omega_1$ has the $\omega_2$-p.i.c., and if \textsf{CH} holds, then every $\omega_2$-p.i.c.\ poset satisfies the $\omega_2$-chain condition. Moreover, by Lemma 2.4 of Chapter VIII of \cite{PROPER}, under the assumption of \textsf{CH}, if $P_{\omega_2}$ is a countable support forcing iteration of $\{\dot{Q}_\beta \colon \beta< \omega_2\}$ such that every that every $\dot{Q}_\beta$ has the $\omega_2$-p.i.c.\ in $V^{P_{\omega_2}\restriction \beta}$, then $P_{\omega_2}$ has the $\omega_2$-chain condition. Therefore, \textsf{CH} implies that $P_{\omega_2}$ does not collapse cardinals. In the context of our specific iteration, we will apply this result by taking each $\dot{Q}_\beta$ to be either a name for a proper poset of size $\omega_1$ or a name for a poset $Q$ as described in the next theorem.

\begin{theorem}\label{posetQ}
There exists a proper countably distributive
poset $Q$ of cardinality $2^{\omega_1}$ with the $\omega_2$-p.i.c.\ satisfying that for every poset $P$ of cardinality $\omega_1$, if $P$ is not proper, then
$$\Vdash_Q \check{P} \mbox{ does not preserve stationary subsets of }\omega_1.$$
\end{theorem}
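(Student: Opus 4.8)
The plan is to realize $Q$ as a countable support iteration of length $2^{\omega_1}$ of $\sigma$-closed (countably closed) forcings $R_P$, one factor for each poset $P$ of cardinality $\omega_1$ that is not proper, arranged by a bookkeeping function. Since a countable support iteration of $\sigma$-closed forcings is again $\sigma$-closed, the resulting $Q$ is automatically $\sigma$-closed, hence proper and countably distributive; under \textsf{CH} it has cardinality $2^{\omega_1}$, and its $\omega_2$-chain condition will follow from the $\omega_2$-p.i.c., which by Lemma 2.4 of Chapter VIII of \cite{PROPER} reduces to checking that each factor $R_P$ has the $\omega_2$-p.i.c. Thus the genuine content is to design the single factor $R_P$ so that it makes $P$ fail to preserve stationary sets, and then to verify the isomorphism condition for $R_P$.

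For the single step, first record a usable form of non-properness. As noted in Section 2, for a poset of size $\omega_1$ being proper is equivalent to being semiproper, so the set of countable $M \prec (H_\theta, \in, <, P)$ admitting some $p \in M \cap P$ with no $(M,P)$-semigeneric extension is stationary in $[H_\theta]^\omega$. Identifying $P$ with a subset of $\omega_1$, the assignment $M \mapsto p_M$ is $\in$-regressive, so by the pressing-down lemma there is a fixed $p^* \in P$ and a stationary set $\mathcal S \subseteq [H_\theta^V]^\omega$ of \emph{bad} models $M$ for which no extension of $p^*$ is $(M,P)$-semigeneric. Unravelling semigenericity, badness of $M$ says exactly that the set of $r \in P$ forcing some $P$-name $\dot\alpha \in M$ for a countable ordinal to be $\ge M \cap \omega_1$ is dense below $p^*$; this is a statement about the ground-model objects $P$, $p^*$, $M$ and the fixed order $P$, so the forcing relation involved is absolute, and $\sigma$-closed forcing adds no new countable subsets of $H_\theta^V$, whence $\mathcal S$ remains bad and stationary in every later model and $p^* \Vdash_P \omega_1^{M[\dot G]} > M \cap \omega_1$ persists in $V[Q]$.

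Let $R_P$ consist of countable continuous $\in$-increasing chains $\langle M_i : i \le \gamma \rangle$ of countable $M \prec H_\theta^V$ with $P, p^* \in M_0$, ordered by end-extension; this is $\sigma$-closed, and its generic is a continuous chain $\langle M_i : i < \omega_1 \rangle$ with trace club $E = \{ M_i \cap \omega_1 \}$. Put $B_P = \{ M_i \cap \omega_1 : M_i \in \mathcal S \}$. To see $B_P$ is stationary, use a reflecting model: given a name for a club, choose a countable $M' \prec (H_\Theta, \in, <, R_P)$ containing it with $M' \cap H_\theta^V \in \mathcal S$ (possible since $\mathcal S$ is stationary in $[H_\theta^V]^\omega$), and build a master condition for $M'$ whose limit model is $M' \cap H_\theta^V$; it forces the trace $M' \cap \omega_1$ into the club and places a bad model at that trace. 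In $V[Q]$ the $P$-name $e$ given on $E$ by $e(M_i \cap \omega_1) = \omega_1^{M_i[\dot G]}$ is nondecreasing, continuous at limits of the chain, satisfies $e(\gamma) \ge \gamma$ everywhere and $e(\gamma) > \gamma$ on $B_P$ by the preserved badness. If $P$ collapses $\omega_1$ it is trivially not stationary set preserving; otherwise $e$ maps into $\omega_1$, so its set of fixed points is a club in the $P$-extension of $V[Q]$ disjoint from $B_P$. Since the $\sigma$-closed tail of $Q$ preserves stationarity of $B_P$, the condition $p^*$ forces the $V[Q]$-stationary set $B_P$ to become nonstationary, i.e.\ $\Vdash_Q \check{P}$ does not preserve stationary subsets of $\omega_1$; bookkeeping over all non-proper $P$ yields the theorem, the size count giving $|Q| = 2^{\omega_1}$.

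The main obstacle will be verifying the $\omega_2$-p.i.c.\ for the individual factor $R_P$. Given countable $N_\alpha, N_\beta \prec (H_\Theta, \in, <, R_P)$ and an isomorphism $\pi : N_\alpha \to N_\beta$ as in the definition, one must construct by a back-and-forth fusion a descending sequence of conditions meeting every dense subset of $R_P$ lying in $N_\alpha$ and, transported via $\pi$, in $N_\beta$, whose union is a single chain of models serving as a master condition for $N_\alpha$ and forcing $\pi``(\dot G \cap N_\alpha) = \dot G \cap N_\beta$. The delicate points are that at the common trace $N_\alpha \cap \omega_1 = N_\beta \cap \omega_1$ the limit of the generic chain must be an elementary submodel absorbing both sides coherently, and that $\pi$ must be respected on the overlap $N_\alpha \cap N_\beta$; it is precisely the model-sequence shape of $R_P$ that makes this possible. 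A secondary technical point, already sketched above, is that the density argument for stationarity of $B_P$ must be run against names for clubs and relies on $\sigma$-closure (preservation of stationarity in $[H_\theta^V]^\omega$, which is unaltered because no new countable sets are added) together with the absoluteness of badness.
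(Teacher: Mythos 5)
Your kill-the-stationary-set mechanism is essentially the paper's: press down to a single condition $p^*$ and a stationary set $\mathcal S$ of models witnessing non-properness, generically thread an $\omega_1$-chain of countable models so that the trace of $\mathcal S$ becomes a stationary $B_P\subset\omega_1$, and observe that $p^*$ forces a club (your closure points of $e$; the paper's $\dot E$ of ordinals where the generic meets the local maximal antichains) disjoint from $B_P$. That part is sound. The fatal problem is the $\omega_2$-p.i.c.\ of your factor $R_P$, and it is not a technical obstacle that a back-and-forth fusion can overcome: it fails already at the level of single conditions. Since $R_P$ consists of chains of \emph{actual} elementary submodels of $H_\theta^V$ ordered by end-extension, any two distinct length-one conditions $\langle M_0\rangle$ and $\langle M_0'\rangle$ are incompatible, and there are at least $|H_{\omega_2}|=2^{\omega_1}\ge\omega_2$ such conditions; so $R_P$ has an antichain of size $\omega_2$ and therefore, under \textsf{CH} (where the $\omega_2$-p.i.c.\ implies the $\omega_2$-c.c.), cannot have the $\omega_2$-p.i.c. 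Concretely, in the p.i.c.\ configuration pick $M_0\in N_\alpha$ countable with $\alpha\in M_0$; then $M_0\subset N_\alpha$ forces $M_0\cap\omega_2\subset\beta$, while $\beta=\pi(\alpha)\in\pi(M_0)$, so $M_0\ne\pi(M_0)$ and the conditions $p=\langle M_0\rangle$ and $\pi(p)=\langle\pi(M_0)\rangle$ have no common extension. The same antichain embeds into your whole iteration $Q$ through its first coordinate, so $Q$ itself fails the $\omega_2$-c.c.\ and the theorem's conclusion.

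The idea you are missing is to make conditions invariant under the isomorphisms $\pi$ appearing in the p.i.c.: the paper's poset records at each $\alpha$ in its support only the \emph{transitive collapse} $\bar M$ of a model with $\omega_1^{\bar M}=\alpha$, together with a countable ``promise'' set $b_q$ of functions that enforces the continuity clause across future extensions. Because isomorphic models $N_\alpha$ and $N_\beta$ have the same transitive collapse, the two canonical master conditions $r(N_\alpha,g)$ and $r(N_\beta,\pi''g)$ have literally identical first coordinates, so their union is a condition below both $p$ and $\pi(p)$ --- exactly what the p.i.c.\ requires. A further payoff of working with collapses is uniformity: a single generic reflection sequence reflects \emph{every} $V$-stationary subset of $[H_{\omega_2}^V]^\omega$ simultaneously, so one poset of size $2^{\omega_1}$ handles all non-proper $P$ at once and no iteration (hence no iterated p.i.c.\ verification, and no bookkeeping) is needed inside the proof of this theorem. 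If you replace your chains of models by the corresponding partial sequences of collapses and add the promise coordinate to recover continuity at limits, your argument essentially becomes the paper's.
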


With this new ingredient, and assuming $\textsf{CH}$ together with $2^{\omega_1}=\omega_2$, the construction of a countable support forcing iteration 
$P_{\omega_2}$ witnessing Theorem \ref{mainthm-0-intro} is very natural. Since $2^{\omega_1}=\omega_2$, we can fix a function $\Phi:\omega_2\to H_{\omega_2}$ with the property that $\{\beta \in \omega_2\,:\,\Phi(\beta)=x\}$ is unbounded in $\omega_2$ for each $x\in H_{\omega_2}$. At stage $\beta< \omega_2$, if $\Phi(\beta)$ is a
$P_{\omega_2}\restriction \beta$-name for a proper poset of cardinality $\omega_1$, then let $\dot{Q}_\beta=\Phi(\beta)$. Otherwise, let $\dot{Q}_\beta$ be a $P_{\omega_2}\restriction \beta$-name for a poset $Q$ as in Theorem \ref{posetQ}.

We claim that $P_{\omega_2}$  forces $\textsf{MM}(\omega_1)$. 
Namely, for every $P_{\omega_2}$-name $\dot{\mathcal{P}}$ for a poset of cardinality  $\omega_1$ and for every sequence $(\dot{D}_i)_{i<\omega_1}$ of $\mathcal{P}_{\omega_2}$-names for dense subsets of $\dot{\mathcal{P}}$, there is a high enough $\beta<\omega_2$ such that $\dot{\mathcal{P}}$ and all members of $(\dot{D}_i)_{i<\omega_1}$ are $P_{\omega_2}\restriction \beta$-names and $\Phi (\beta)= \dot{\mathcal{P}}$. So, $\dot{Q}_\beta$ is either $\dot{\mathcal{P}}$ or a $P_{\omega_2}\restriction \beta$-name for a poset $Q$ as in Theorem \ref{posetQ} depending on whether or not $\dot{\mathcal{P}}$ is a
$P_{\omega_2}\restriction \beta$-name for a proper poset. This is possible thanks to the $\omega_2$-chain condition of $P_{\omega_2}$, the fact that $P_{\omega_2}$ has cardinality
$2^{\omega_1} = \omega_2$, and the unboundedness assumption on the bookkeeping function $\Phi$.

The rest of this section is devoted to proving Theorem \ref{posetQ}.

\begin{definition}
In an $\omega_1$-preserving forcing extension $V[G]$, a \emph{continuous $V$-reflection sequence} is a sequence $\langle\bar M_\ga\colon\ga\in C\rangle$ such that:
\begin{enumerate}
\item $C\subset\gw_1$ is a closed unbounded set;
\item for each $\ga\in C$, $\bar M_\ga$ is the transitive collapse of some elementary submodel of $( H_{\omega_2}^V, \in)$ such that $\ga=\gw_1^{\bar M_\ga}$;
\item (\emph{continuity}) for every $\ga\in C$ and every function $x\colon \ga^{<\gw}\to\ga$ in the model $\bar M_\ga$ there is $\gg\in\ga$ such that for every ordinal $\gd\in C$ between $\gg$ and $\ga$, $x\restriction\gd^{<\gw}\in\bar M_\gd$ (which implies by (2) above that $\gd$ is closed under $x$);
\item (\emph{reflection}) for every stationary set $S\subset [H_{\omega_2}^V]^{\omega}$ in $V$,
the set $\{\ga\in C\colon\bar M_\ga$ is the transitive collapse of some element of $S\}\subset\gw_1$ is stationary.
\end{enumerate}
\end{definition}

\begin{proposition}\label{reflection}
Let $V[G]$ be an $\omega_1$-preserving forcing extension in which there exists a continuous $V$-reflection sequence. In $V$, let $P$ be a forcing of cardinality $\omega_1$ which is not proper. Then $V[G]\models P$ does not preserve stationary subsets of $\gw_1$.
\end{proposition}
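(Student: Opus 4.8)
The plan is to convert the failure of properness into a failure of stationary set preservation by transporting a ``destroyed'' stationary set of countable models down to $\omega_1$ along the reflection sequence. First I would invoke Shelah's characterization of properness (Chapter III of \cite{PROPER}): a poset is proper if and only if it preserves the stationarity of every stationary subset of $[H_{\omega_2}]^\omega$. Since $P$ is not proper and $|P|=\omega_1$ (so that all relevant genericity is witnessed inside $H_{\omega_2}^V$), there are, in $V$, a stationary set $T\subseteq [H_{\omega_2}^V]^{\omega}$ consisting of countable elementary submodels of $(H_{\omega_2}^V,\in,<^*,P)$, where $<^*$ is a fixed well-ordering, and a condition $r_0\in P$ such that $r_0\Vdash_P \check T$ is nonstationary. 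Applying the reflection clause~(4) to $T$, I obtain in $V[G]$ a stationary set $S\subseteq C\subseteq\omega_1$ such that for each $\alpha\in S$ the model $\bar M_\alpha$ is the transitive collapse of some $X_\alpha\in T$; taking the $<^*$-least such witness makes $\langle X_\alpha\colon\alpha\in S\rangle$ a member of $V[G]$. It then suffices to show that $r_0\Vdash_P \check S$ is nonstationary.

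So fix a generic $g\ni r_0$ and work in $V[G][g]$. In $V[g]$ the set $T$ is nonstationary, so fix a function $F\colon [H_{\omega_2}^V]^{<\omega}\to H_{\omega_2}^V$ whose closure club $\mathcal C=\{X : X\prec H_{\omega_2}^V \text{ is closed under } F\}$ is disjoint from $T$. The heart of the argument is to manufacture from $F$ a club $E\subseteq\omega_1$ disjoint from $S$, and here the continuity clause~(3) is exactly what is needed: it forces the collapses $\langle \bar M_\alpha\colon\alpha\in C\rangle$ to cohere like a continuous $\in$-increasing tower, so that, after thinning $C$ to a club, the uncollapses $\langle X_\alpha\rangle$ form a continuous $\subseteq$-increasing chain of elementary submodels of $H_{\omega_2}^V$ with $X_\alpha\cap\omega_1=\alpha$ and $\bigcup_\alpha X_\alpha=H_{\omega_2}^V$ (covering follows by reflecting, via clause~(4), each club $\{X:a\in X\}$ for $a\in H_{\omega_2}^V$). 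For any such chain and \emph{any} $F$, the set $E=\{\alpha : X_\alpha \text{ is closed under } F\}$ is club: letting $c(\alpha)$ be least with $F[X_\alpha^{<\omega}]\subseteq X_{c(\alpha)}$, every closure point of $c$ is $F$-closed by continuity of the chain. Finally $E\cap S=\emptyset$, since for $\alpha\in S$ we have $X_\alpha\in T$ and $T\cap\mathcal C=\emptyset$, so $X_\alpha$ is not $F$-closed and $\alpha\notin E$. Thus $S$ is nonstationary in $V[G][g]$, as required.

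The step I expect to be the main obstacle is precisely this middle passage: extracting from clause~(3) an honest continuous covering chain and then transferring $F$-closure. The delicate point is that $F$ lives in $V[g]$ and has cardinality $\omega_2$, so $F$ is \emph{not} an element of any $X_\alpha$ and $F$-closure cannot be had by elementarity inside the models; it must instead be assembled externally along the chain, using only that clause~(3) guarantees that each ordinal function of $\bar M_\alpha$ already appears in the earlier collapses $\bar M_\delta$. One must also check that the canonical uncollapse $X_\alpha$ supplied by the chain agrees, for $\alpha\in S$, with the chosen $T$-witness, equivalently that the reflection of clause~(4) can be arranged for the continuous chain itself, so that the disjointness $E\cap S=\emptyset$ is legitimate. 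Getting clauses~(2)--(4) to yield simultaneously the continuity, the covering, and the correct matching at points of $S$ is the technical core of the proof.
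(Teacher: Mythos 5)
Your opening move is fine and matches the paper's: non\-properness of a poset with underlying set $\omega_1$ is witnessed by a condition and a stationary set of countable elementary submodels of $H_{\omega_2}^V$ which is forced to be nonstationary. But the middle passage you yourself flag as the main obstacle is a genuine gap, and it cannot be repaired along the lines you propose. First, the reflection sequence records only the transitive collapses $\bar M_\alpha$; there is no canonical uncollapse, and the continuity clause~(3) gives coherence only for functions $\alpha^{<\omega}\to\alpha$, which is far too weak to make any choice of uncollapses $\langle X_\alpha\rangle$ into a continuous $\subseteq$-increasing chain (in the poset $Q$ built later in the paper, the models placed at distinct points of a condition's support may be collapses of quite unrelated submodels, subject only to the ordinal-function continuity). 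Second, even granting such a chain, its union has size at most $\omega_1$ while $|H_{\omega_2}^V|\geq\omega_2$, so the chain cannot cover $H_{\omega_2}^V$; the value $c(\alpha)$ you define need not exist, and the set $E$ of $F$-closed $X_\alpha$ can perfectly well be empty rather than club. Underlying both failures is the fact that nonstationarity in $[H_{\omega_2}^V]^{\omega}$ does not project to nonstationarity in $\omega_1$ along the sequence: clause~(4) transfers stationarity downward, but a single transitive collapse can arise simultaneously from a member of $T$ and from a member of the disjoint club $\mathcal C$, so the disjointness $E\cap S=\emptyset$ is not legitimate.

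The paper avoids ever transferring a club of $[H_{\omega_2}^V]^{\omega}$ down to $\omega_1$. It chooses the destroyed stationary set more carefully: by a pigeonhole argument there are a single $p$ and a stationary $S\subseteq[H_{\omega_2}]^{\omega}$ such that no model in $S$ has a master condition below $p$. It then defines, in the $P$-extension, the set $\dot E$ of those $\alpha\in C$ such that the generic filter meets every maximal antichain of $P\restriction\alpha$ lying in $\bar M_\alpha$. Every object in sight is then a subset of $\omega_1$, so clause~(3) suffices to prove that $\dot E$ is closed; unboundedness is obtained by a separate tower argument carried out in $V$ (producing, below any $q$, stationarily many models with a master condition below $q$) combined with clause~(4); and disjointness of $\dot E$ from the trace of $S$ is immediate from the master-condition formulation, since a condition witnessing $\alpha\in\dot E$ for $\alpha$ in the trace would be a master condition below $p$ for a model in $S$. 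You should replace the $F$-closure argument with a club of this genericity-over-$\bar M_\alpha$ form.
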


\begin{proof}
First, move to $V[G]$ and fix a continuous $V$-reflection sequence $\langle\bar M_\ga\colon\ga\in C\rangle$.

\begin{claim}
For every function $x\colon \gw_1^{<\gw}\to\gw_1$ in $V$, for all but countably many $\ga\in C$, $x\restriction \ga^{<\gw}\in\bar M_\ga$ holds.
\end{claim}

\begin{proof}
By the reflection property of the sequence, the set $S=\{\ga\in C\colon x\restriction\ga^{<\gw}\in\bar M_\ga\}\subset\gw_1$ is stationary. Use the continuity property of the sequence to find a regressive function $f\colon S\to\gw_1$ such that for every ordinal $\ga\in S$ and every ordinal $\gd\in C$ between $f(\ga)$ and $\ga$, $x\restriction\gd^{<\gw}\in\bar M_\gd$. Use Fodor's lemma to find an ordinal $\gg\in\gw_1$ such that the set $\{\ga\in S\colon f(\ga)=\gg\}$ is stationary. It is immediate from the definitions that for every ordinal $\gd\in C$ greater than $\gg$, $x\restriction\gd^{<\gw}\in\bar M_\gd$.
\end{proof}

Now, let $P$ be a poset of cardinality $\omega_1$ which is not proper, which we may assume without loss of generality has underlying set $\omega_1$. If $P$ collapses $\omega_1$, then it also
collapses $\omega_1$ in $V[G]$, and hence is not stationary set preserving.
So assume that $P$ preserves $\omega_1$.
Note that $P$ can be coded in $( H_{\omega_2}^V, \in)$ by a function $x\colon \gw_1^{<\gw}\to\gw_1$ in $V$ (for example, by the characteristic function of its partial ordering). By the claim, thinning out $C$ to a closed unbounded subset if necessary, we may assume that for every $\ga\in C$, $P\restriction\ga\in\bar M_\ga$.

Now, return to $V$ and observe that since $P$ is not proper, by a pigeonhole argument
there must be a condition $p\in P$ and a stationary set $S\subset [H_{\omega_2}]^{\omega}$ such that no model in $S$ has a master condition below $p$. Move to $V[G]$ and use the reflection property of the sequence to conclude that the set $T=\{\ga\in C\colon \bar M_\ga$ is the transitive collapse of some model in $S\}$ is stationary. It will be enough to show that in $V[G]$, $p\Vdash_P\check T$ is nonstationary.

To this end, let $\dot E$ be the $P$-name for the set $\{\ga\in C\colon$ the $P$-generic filter has nonempty intersection with every maximal antichain of $P\restriction\ga$ in the model $\bar M_\ga\}$.

\begin{claim}
$\Vdash_P \dot E\subset\gw_1^V$ is a closed unbounded set.
\end{claim}

\begin{proof}
First, argue for the unboundedness. Let $q\in P$ be a condition and $\gg\in\gw_1$ be an ordinal. Back in $V$, consider the set $U\subset [H_{\omega_2}]^{\omega}$ of all models which contain $\gamma$ and have a master condition below $q$. Let us prove that the set $U$ is stationary.

To this end, let $f\colon H_{\omega_2}^{<\gw}\to H_{\omega_2}$ be a function. To find a model $M\in U$ closed under the function $f$, let $\theta$ be a large enough regular cardinal and let $X=\langle N_\ga\colon\ga\in\gw_1\rangle$ be a continuous increasing tower of countable elementary submodels of $H_\theta$ containing $f$ as an element, and let $N=\bigcup_\ga N_\ga$. Let $H\subset P$ be a generic filter containing $q$, and consider the models $N_\ga[H]$ for $\ga\in\gw_1$ and $N[H]$. Since the poset $P$ is a subset of $N$, it is clear that $N[H]\cap V=N$. The models $\langle N_\ga[H]\colon\ga\in\gw_1\rangle$ form a continuous increasing sequence of countable subsets of $N[H]$, so $Y=\langle N_\ga[H]\cap V\colon\ga\in\gw_1\rangle$ is a continuous increasing sequence of countable subsets of $N[H]\cap V=N$. Since $\gw_1$ is preserved passing to $V[H]$, the sequences $X$ and $Y$ must intersect at some point, i.e.\ there must be an ordinal $\ga\in\gw_1$ such that $N_\ga[H]\cap V=N_\ga$. Fix a condition $r\leq q$ in $H$ such that $r\Vdash^V_P N_\ga[\dot H]\cap V=N_\ga$. Then $r$ is a master condition for $N_\ga\cap H_{\omega_2}$, and $N_\ga\cap H_{\omega_2}$ is a model in the set $U$ closed under the function $f$.

 In $V[G]$ again, use the reflection property to find an ordinal $\ga\in C$ which is greater than $\gg$ and such that $\bar M_\ga$ is the transitive collapse of some model $M$ in $U$. By the definition of $U$, fix a master condition $r\leq q$ for $M$. It easily follows that
 $r$ forces that $\check\ga\in\dot E$. This completes the proof of the unboundedness of $\dot E$.

For the closure, suppose that some condition $q\in P$ forces an ordinal $\ga\in C$ to be a limit point of $\dot E$. To show that $q\Vdash\check\ga\in\dot E$, let $A\in\bar M_\ga$ be a maximal antichain of $P\restriction\ga$ in the model $\bar M_\ga$ and let $r \le q$; we must find a condition in $A\cap \ga$ compatible with $r$. To do this, apply the continuity property to a suitable function to find an ordinal $\gamma\in\ga$ such that for every ordinal $\gd\in C$ between $\gg$ and $\ga$, $A\cap\gd$ is a maximal antichain of $P\restriction\gd$ in the model $\bar M_\gd$. Since $r$ forces that $\ga$ is a limit point of $\dot E$,
find a condition $s\leq r$ and an ordinal $\gd\in C$ between $\gg$ and $\ga$ such that $s\Vdash\check\gd\in\dot E$. By the definition of the name $\dot E$, there must be an element of $A\cap\gd$ compatible with the condition $s$, and hence with $r$. This completes the proof.
\end{proof}

It is now clear from the definitions that in $V[G]$, $p\Vdash_P\dot E\cap\check T= \emptyset$.
The proof of the proposition is complete.
\end{proof}

We now define the poset for adding a continuous $V$-reflection sequence.

\begin{definition}
$Q$ is the set of all pairs $q=\langle a_q, b_q\rangle$ where

\begin{enumerate}
\item $a_q$ is a function whose domain is a closed countable subset of $\gw_1$ called the \emph{support of} $q$, $\supp(q)$;
\item for every ordinal $\ga\in\supp(q)$, writing $M=a_q(\ga)$, we have that $M$ is the transitive collapse of a countable elementary submodel of $( H_{\omega_2}^V, \in)$
such that $\gw_1^{M}=\ga$;
\item (continuity) for every $\ga\in\supp(q)$ and every function $x\colon \ga^{<\gw}\to\ga$ in the model $a_q(\ga)$, there is $\gg\in\ga$ such that for every ordinal $\gd\in\supp(q)$ between $\gg$ and $\ga$, $x\restriction\gd^{<\gw}\in a_q(\gd)$;
\item $b_q$ is a countable set of functions from $\gw_1^{<\gw}$ to $\gw_1$.
\end{enumerate}

\noindent The ordering is given by $r\leq q$ if $\supp(r)$ is an end-extension of $\supp(q)$, $a_q\subset a_r$, $b_q\subset b_r$, and for every ordinal $\ga\in\supp(r)\setminus\supp(q)$ and every function $x\in b_q$,  $x\restriction\ga^{<\gw}\in a_r(\ga)$.
\end{definition}

It is not difficult to see that the forcing $Q$ has cardinality $2^{\omega_1}$, and the relation $\leq$ is transitive and reflexive. 
The relevant forcing properties of $Q$ are all derived from the following consideration:

\begin{definition}
Let $M$ be a countable elementary submodel of $H(\kappa)$, for
a large enough regular cardinal $\kappa$. Let $g\subset M$ be a filter meeting all open dense subsets of $Q$ in $M$.

\begin{enumerate}
\item Let $a=\bigcup_{s\in g}a_s\cup\{\langle M\cap\gw_1, \bar M\rangle\}$, where $\bar M$ is the transitive collapse of the model $M\cap H_{\omega_2}$;
\item let $b$ be the set of all functions from $\gw_1^{<\gw}$ to $\gw_1$ belonging to the model $M$;
\item let $r(M, g)=\langle a, b\rangle$.
\end{enumerate}
\end{definition}

\begin{proposition}
$r(M, g)$ is a condition in $Q$ which is a common lower bound of all conditions in $g$.
\end{proposition}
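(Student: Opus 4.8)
Write $\delta = M \cap \omega_1$ and let $\pi \colon M \cap H_{\omega_2} \to \bar M$ be the transitive collapse, so that $\pi(\omega_1) = \delta$ and, since every ordinal below $\delta$ is a countable ordinal lying in (hence a subset of) $M$, the restriction $\pi \restriction \delta$ is the identity. Put $U = \bigcup_{s \in g} \supp(s)$, so that $\supp(a) = U \cup \{\delta\}$. The plan is to verify in turn the four clauses in the definition of $Q$ for $\langle a, b \rangle$, and then the four clauses of the ordering for $\langle a, b\rangle \le s$ with $s \in g$ arbitrary.

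I would first record the structural facts about the supports. Because $g$ is a filter and $r \le q$ requires $\supp(r)$ to end-extend $\supp(q)$, the family $\{\supp(s) : s \in g\}$ is directed under ``is an initial segment of,'' and each $\supp(s)$, being a closed bounded subset of $\omega_1$, has a maximum $\mu_s$. For $\gamma < \delta$ the set of conditions whose support has a point above $\gamma$ is dense and lies in $M$, so by genericity the $\mu_s$ are cofinal in $\delta$; together with directedness this yields $\supp(s) = U \cap [0,\mu_s]$ for each $s$, whence, each $\supp(s)$ being closed, $U$ is closed below $\delta$. Thus $\supp(a) = U \cup \{\delta\}$ is a closed countable subset of $\omega_1$, giving clause (1). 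Directedness also makes the $a_s$ cohere, so $a$ is well defined with $a(\alpha) = a_s(\alpha)$ whenever $\alpha \in \supp(s)$; clause (2) then holds at each $\alpha \in U$ because $s$ is a condition, and at $\alpha = \delta$ because $M \cap H_{\omega_2} \prec H_{\omega_2}^V$ (as $H_{\omega_2}^V \in M$) and $\pi(\omega_1) = \delta$. Finally $b \subseteq M$ is countable, giving clause (4).

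The heart of the argument is the continuity clause (3), and specifically its instance at the new top point $\delta$; this is the step I expect to be the main obstacle. For $\alpha \in U$ continuity is inherited from any single $s \in g$ with $\alpha \in \supp(s)$, using that $\supp(a) \cap \alpha = U \cap \alpha \subseteq \supp(s)$. Now fix $\alpha = \delta$ and $x \colon \delta^{<\omega} \to \delta$ in $a(\delta) = \bar M$, and set $\tilde x = \pi^{-1}(x) \in M \cap H_{\omega_2}$; then $\tilde x \colon \omega_1^{<\omega} \to \omega_1$ lies in $M$, so $\tilde x \in b$. The set $\{ s \in Q : \tilde x \in b_s \}$ is open dense (one may enlarge $b_s$ without touching the support) and belongs to $M$, so some $s_0 \in g$ has $\tilde x \in b_{s_0}$. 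By the last clause of the ordering, for every $s \le s_0$ in $g$ and every $\delta' \in \supp(s) \setminus \supp(s_0)$ one has $\tilde x \restriction (\delta')^{<\omega} \in a(\delta')$; taking $\gamma = \mu_{s_0} < \delta$, this holds for all $\delta' \in U \cap (\gamma, \delta)$. Since $\pi \restriction \delta$ is the identity and $\tilde x$ sends tuples below $\delta$ into $\omega_1 \cap M = \delta$, one checks $x \restriction (\delta')^{<\omega} = \tilde x \restriction (\delta')^{<\omega}$, establishing continuity at $\delta$. This is exactly the reason $b$ is defined to consist of all functions $\omega_1^{<\omega} \to \omega_1$ lying in $M$.

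It then remains to show $\langle a, b \rangle \le s$ for each $s \in g$. End-extension of $\supp(s)$ by $\supp(a)$ and the inclusions $a_s \subseteq a$ and $b_s \subseteq b$ are immediate from the structural facts above (here $b_s \subseteq M$ because $s \in M$ and $b_s$ is countable). For the last clause I would fix $x \in b_s$ and $\alpha \in \supp(a) \setminus \supp(s) = \bigl(U \cap (\mu_s,\delta)\bigr) \cup \{\delta\}$ and show $x \restriction \alpha^{<\omega} \in a(\alpha)$: when $\alpha = \delta$ this reads $x \restriction \delta^{<\omega} = \pi(x) \in \bar M$, again using that $\pi$ fixes ordinals below $\delta$; and when $\alpha \in U \cap (\mu_s,\delta)$, choosing $t \le s$ in $g$ with $\alpha \in \supp(t)$ places $\alpha \in \supp(t) \setminus \supp(s)$, so the last clause of $t \le s$ gives $x \restriction \alpha^{<\omega} \in a_t(\alpha) = a(\alpha)$. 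This completes the verification that $r(M,g)$ is a condition and a common lower bound of $g$.
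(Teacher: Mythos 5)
Your proposal is correct and follows essentially the same route as the paper's proof: the key points in both are the density argument producing some $s_0\in g$ with $\tilde x\in b_{s_0}$ so that $\gamma=\max\supp(s_0)$ witnesses continuity at the top point $M\cap\omega_1$, and the use of elementarity of $M$ together with the fact that the collapse fixes ordinals below $M\cap\omega_1$ to handle the lower-bound clause at that top point. You simply carry out in full the routine support and coherence checks that the paper compresses into a sentence.
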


\begin{proof}
 Write $\ga=M\cap\gw_1$. First of all, the fact that $g$ is a filter shows that $\bigcup_{s\in g}a_s$ is a function, and its domain $c$ is a subset of $M\cap\gw_1$ which is closed except perhaps at its supremum. A simple density argument shows that in fact $\sup(c)=\ga$. Thus, to verify that $r(M, g)$ is a condition, it is only necessary to check the continuity of $a$ at $\ga$. Let $y\colon\ga^{<\gw}\to\ga$ be any function in the model $\bar M$, and let $x\in M$ be the function whose collapse is $y$. By a density argument, there must be a condition $s\in g$ such that $x\in b_s$. The definition of the ordering on $Q$ then shows that the ordinal $\gg=\max\supp(s)$ witnesses the continuity condition for $\ga$ and $x$.

To verify that for every condition $s\in g$, $r(M, g)\leq s$ holds, it is enough to verify that for every ordinal $\gd\in\dom(a)\setminus\supp(s)$ and every $x\in b_s$, it is the case that $x\restriction\gd^{<\gw}\in a(\gd)$. For $\gd\in\ga$ this is immediately clear from the assumption that $g$ is a filter. If $\gd=\ga$, then $x\in M$ since $x\in b_s$ and $s\in M$; by the elementarity of $M$ we conclude again that $x\restriction\gd^{<\gw}$ belongs to $a(\ga)$, since it is the transitive collapse image of the function $x$.
\end{proof}

\begin{corollary}
The poset $Q$ satisfies the following properties:

\begin{enumerate}
\item proper;
\item countably distributive;
\item $\omega_2$-p.i.c.
\end{enumerate}
\end{corollary}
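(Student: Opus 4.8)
The plan is to verify each of the three properties using the structural result just established, namely that for every countable elementary submodel $M$ of some $H(\kappa)$ and every filter $g\subset M$ meeting all dense open subsets of $Q$ in $M$, the pair $r(M,g)$ is a condition in $Q$ below every element of $g$. This single construction is the engine behind all three items, so each proof will reduce to exhibiting such a filter $g$ with the required properties.

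For properness, I would take a countable $M\prec H(\kappa)$ with $Q\in M$ and an arbitrary condition $p\in M\cap Q$. Build an $M$-generic filter $g\subset M\cap Q$ containing $p$ by enumerating the countably many dense open subsets of $Q$ lying in $M$ and descending through them inside $M$; this is possible because $Q$ is separative enough and $M$ is countable. Then $r(M,g)$ is a master condition for $M$ below $p$: it lies below every element of $g$, hence meets every dense open set in $M$ in the sense required, so it forces $M[\dot G]\cap V=M$ and $M\cap\gw_1=(M[\dot G])\cap\gw_1$. For countable distributivity, I would show that $Q$ adds no new $\gw$-sequences of ordinals: given a condition $p$ and a name $\dot f$ for a function $\gw\to\mathrm{Ord}$, place both in a countable $M\prec H(\kappa)$ and run the same genericity construction, obtaining the master condition $r(M,g)\le p$. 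Because $r(M,g)$ is $M$-generic and every value of $\dot f$ is decided inside $M$ by density, the whole of $\dot f$ is determined by ground-model data read off from $g$, so $r(M,g)$ forces $\dot f\in V$. Equivalently, countable closure-like behavior follows since any decreasing $\gw$-chain of conditions built inside $M$ has $r(M,g)$ as a lower bound.

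For the $\gw_2$-p.i.c.\ I would follow the Definition above directly. Given $\ga<\gb<\gw_2$, countable models $N_\ga,N_\gb\prec(H_\theta,\in,<,Q)$ with $\ga\in N_\ga$, $\gb\in N_\gb$, $N_\ga\cap\gw_2\subset\gb$, $N_\ga\cap\ga=N_\gb\cap\gb$, the isomorphism $\pi\colon N_\ga\to N_\gb$ fixing $N_\ga\cap N_\gb$ with $\pi(\ga)=\gb$, and $p\in N_\ga\cap Q$, I would construct a single filter $g$ that is simultaneously $N_\ga$-generic and, via $\pi$, $N_\gb$-generic, containing both $p$ and $\pi(p)$. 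The key point is that since $Q$ is definable over $H_{\gw_2}^V$ and $\pi$ fixes the relevant parameters, $\pi$ maps dense open subsets of $Q$ in $N_\ga$ to dense open subsets of $Q$ in $N_\gb$, so interleaving the two generic constructions is coherent: a descending sequence through the dense sets of $N_\ga$ can be transported by $\pi$ to one through $N_\gb$. Setting $q=r(N_\ga\cap H_{\gw_2}^V,g)$ then yields a master condition for $N_\ga$ below $p$ and $\pi(p)$, and $M$-genericity together with the fact that $\pi$ is the identity on the common part forces $\pi``(\dot G\cap N_\ga)=\dot G\cap N_\gb$.

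The main obstacle will be the $\gw_2$-p.i.c.\ step, specifically the interleaving of the two generic filters and the verification that $\pi$ respects the continuity clause (3) in the definition of $Q$. The subtlety is that conditions record \emph{transitive collapses} of submodels of $H_{\gw_2}^V$ together with continuity constraints tying together values at different coordinates; when transporting dense sets across $\pi$ I must check that the continuity requirement imposed by functions in $b_q$ is preserved, and that the supports built on the $N_\ga$ side and the $N_\gb$ side agree exactly on $N_\ga\cap\ga=N_\gb\cap\gb$ so that the resulting master condition genuinely extends both $p$ and $\pi(p)$ without conflict at the shared coordinates. I would handle this by arranging the enumeration of dense sets so that the construction on the two sides proceeds in lockstep under $\pi$, using that $\pi$ fixes ordinals below $\ga$ and hence fixes the relevant restrictions $x\restriction\gd^{<\gw}$ for $\gd<\ga$.
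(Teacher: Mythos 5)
Your arguments for properness and countable distributivity match the paper's: a single $M$-generic filter $g$ and the condition $r(M,g)$ do all the work, and your reformulation of distributivity via names for $\gw$-sequences is an equivalent packaging of the paper's ``put the $D_n$ into $M$'' argument.

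For the $\gw_2$-p.i.c., however, your closing step is wrong as stated, and the missing repair is exactly the one nontrivial idea in the paper's proof. The condition $q=r(N_\ga\cap H_{\gw_2}^V,g)$ is \emph{not} below $\pi(p)$ in general: the ordering on $Q$ requires $b_{\pi(p)}\subset b_q$, but $b_{\pi(p)}$ consists of the functions $\pi(x)\colon\gw_1^{<\gw}\to\gw_1$ for $x\in b_p$, and these are elements of $N_\gb$ that typically do not lie in $N_\ga$ (a function on all of $\gw_1^{<\gw}$ is not hereditarily countable, so $\pi$ does not fix it). Hence the $b$-part of any condition built purely from the $N_\ga$ side cannot absorb $b_{\pi(p)}$, and no amount of interleaving the two generic constructions fixes this. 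What actually works --- and what the paper does --- is to form \emph{both} limit conditions $r(M,g)=\langle a_M,b_M\rangle$ and $r(N,\pi''g)=\langle a_N,b_N\rangle$ (where $M,N$ are the two submodels) and observe that $a_M=a_N$: every $a_s$ for $s\in g$ is hereditarily countable, hence fixed by $\pi$, and the top entries agree because $M\cap\gw_1=N\cap\gw_1$ and $M$, $N$ have the same transitive collapse. The common lower bound of $g\cup\pi''g$ is then the amalgam $\langle a_M, b_M\cup b_N\rangle$; note that enlarging the $b$-part costs nothing since the support is not being extended. Your paragraph of ``main obstacles'' correctly senses that the agreement of the two sides at shared coordinates is the issue, but the resolution you propose (lockstep enumeration of dense sets) does not address the real problem, which lives in the $b$-coordinates rather than in the supports.
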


\begin{proof}
For (1), let $q\in Q$ be a condition and let $M$ be a countable elementary submodel of
$H(\kappa)$, for a large enough regular cardinal $\kappa$,
such that $q$ and $Q$ are in $M$.
Construct a filter $g\subset M\cap Q$ containing the condition $q$ and meeting all dense open subsets of $Q$ which belong to the model $M$. It is immediate that $r(M,g)$ is a master condition for the model $M$ below $q$.
For (2), if in addition $\{D_n\colon n\in\gw\}$ is a countable collection of open dense subsets of $Q$ and $M$ is selected in such a way that each $D_n$ is in M, then $r(M, g)$ is a condition below $q$ in the intersection $\bigcap_nD_n$.

For (3), suppose that $M, N$ are two isomorphic countable elementary submodels.
By the Mostowski collapse lemma, the isomorphism is unique, and we denote it by $\pi\colon M\to N$. Let $q\in M\cap Q$ be an arbitrary condition and let $g\subset M\cap Q$ be a filter having $q$ as an element and meeting all open dense subsets of $Q$ which belong to the model $M$. It will be enough to show that there is a condition $r$ extending all the elements of the set $g\cup \pi''g$. To find $r$, write $r(M, g)=\langle a_M, b_M\rangle$ and $r(N, \pi''g)=\langle a_N, b_N \rangle$, and observe that $a_M=a_N$ since the isomorphism $\pi$ fixes $M \cap \omega_1=N \cap \omega_1$ and because the two models $M$ and $N$ have the same transitive collapse. So, $r(M, g)$ and $r(N, \pi''g)$ are compatible as witnessed by the common extension $r=\langle a_M, b_M\cup b_N\rangle$ and $r$ works as desired.
\end{proof}

\begin{corollary}
Let $G\subset Q$ be a generic filter. In the model $V[G]$, let $F=\bigcup\{ a\colon\exists r\in G\ a=a_r\}$. Then $F$ is a continuous $V$-reflection sequence.
\end{corollary}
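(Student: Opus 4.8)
The plan is to verify that $F=\langle\bar M_\ga\colon\ga\in C\rangle$, with $C=\dom(F)$, satisfies the four clauses defining a continuous $V$-reflection sequence. Since $G$ is a filter, any two of its conditions have a common lower bound in $G$, and the definition of the ordering forces the two supports to be comparable as initial segments; hence $\{\supp(s)\colon s\in G\}$ is linearly ordered by end-extension, $F=\bigcup_{s\in G}a_s$ is a genuine function, and $C=\bigcup_{s\in G}\supp(s)$. I record the ``freezing'' principle I will use twice: if $s\in G$ and $\gd\in\supp(s)$, then $C\cap(\gd+1)=\supp(s)\cap(\gd+1)$, since any smaller element of $C$ lies in a common extension of $s$ whose support end-extends $\supp(s)$. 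Clause~(2) is then immediate, because clause~(2) in the definition of $Q$ guarantees that each $a_s(\ga)=\bar M_\ga$ is the transitive collapse of a countable elementary submodel of $(H_{\omega_2}^V,\in)$ with $\gw_1^{\bar M_\ga}=\ga$.

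For clause~(1) I would first note unboundedness: the master condition $r(M,g)$ of the preceding proposition has $M\cap\gw_1$ as the top point of its support, and $M\cap\gw_1$ can be made arbitrarily large, so $\{s\colon\sup\supp(s)>\gb\}$ is dense for every $\gb<\gw_1$. For closure, fix a limit $\ga<\gw_1$ and show that $D_\ga=\{s\colon\ga\in\supp(s)\text{ or }\sup\supp(s)>\ga\}$ is dense: given $q$ with $\sup\supp(q)<\ga$, pick a countable $N\prec H_{\omega_2}^V$ with $q,b_q\in N$ and $N\cap\gw_1=\gd$ for a suitable limit $\gd>\ga$, and adjoin the single point $\gd$ with value the collapse of $N$; by elementarity $\gd$ is closed under each $x\in b_q$ and $x\restriction\gd^{<\gw}$ lands in the new model, so the result is a condition below $q$ in $D_\ga$. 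Meeting $D_\ga$ with $G$, either $\ga\in C$, or some $s\in G$ has $\sup\supp(s)>\ga$ with $\ga\notin\supp(s)$; in the latter case the freezing principle gives $C\cap\ga\subseteq\supp(s)\cap\ga$, which is bounded below $\ga$ because $\supp(s)$ is closed, so $\ga$ is not a limit point of $C$. Hence $C$ is closed.

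Clause~(3) is the delicate step. Fix $\ga\in C$ and a function $x\colon\ga^{<\gw}\to\ga$ in $\bar M_\ga$, and let $\hat x\in M$ be the function on $\gw_1^{<\gw}$ whose collapse is $x$, where $M\prec H_{\omega_2}^V$ collapses to $\bar M_\ga$. The idea is to use genericity to find $s\in G$ with $\ga\in\supp(s)$ and $\hat x\in b_s$: below any $r\in G$ with $\ga\in\supp(r)$ such conditions are dense, since one may enlarge the $b$-component without touching the support. Applying clause~(3) of the definition of $Q$ to $s,\ga,x$ yields $\gg<\ga$ handling every $\gd\in\supp(s)$ between $\gg$ and $\ga$. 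Any remaining $\gd\in C$ with $\gg<\gd<\ga$ and $\gd\notin\supp(s)$ is introduced by some $s'\le s$ in $G$, and the last clause of the ordering, applied to $\hat x\in b_s$, forces $\hat x\restriction\gd^{<\gw}\in a_{s'}(\gd)=\bar M_\gd$; since the collapse $M\to\bar M_\ga$ fixes all ordinals below $\ga$, one checks $\hat x\restriction\gd^{<\gw}=x\restriction\gd^{<\gw}$, giving $x\restriction\gd^{<\gw}\in\bar M_\gd$. Because new points obey $\hat x$ regardless of their position, the single $\gg$ suffices. I expect this reconciliation of the uncollapsed $\hat x$ with $x$, and the need for one $\gg$ covering both the old support points and every later-added point, to be the main obstacle.

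For clause~(4), let $S\in V$ be stationary in $[H_{\omega_2}^V]^{\gw}$; working in $V[G]$, fix $q$ and a $Q$-name $\dot E$ that $q$ forces to be a club subset of $\gw_1$, and aim to find an extension of $q$ forcing the reflection set to meet $\dot E$. Returning to $V$, stationarity of $S$ makes the set of countable $M\prec H(\kappa)$ (for large regular $\kappa$) with $M\cap H_{\omega_2}^V\in S$ stationary in $[H(\kappa)]^{\gw}$, so I choose such an $M$ with $Q,q,\dot E\in M$; write $N=M\cap H_{\omega_2}^V\in S$ and $\ga=M\cap\gw_1$. Build a filter $g\subset M\cap Q$ containing $q$ and meeting every dense open subset of $Q$ in $M$, and form the master condition $r(M,g)\le q$. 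By construction $a_{r(M,g)}(\ga)$ is the transitive collapse of $N\in S$, so $r(M,g)$ forces that $\bar M_{\check\ga}$ is the collapse of a member of $S$; and since $r(M,g)$ is a master condition for $M$ and $\dot E\in M$ is forced to be club, the usual argument gives $r(M,g)\Vdash\check\ga\in\dot E$. Thus $r(M,g)$ forces $\ga$ into the intersection of $\dot E$ with the reflection set. As $q$ and $\dot E$ were arbitrary, the reflection set is forced to be stationary, which completes the verification.
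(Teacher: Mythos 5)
Your proof is correct and, on the only clause the paper actually argues in detail (the reflection property), it follows exactly the paper's route: choose $M\prec H(\kappa)$ with $M\cap H_{\omega_2}\in S$ containing $q$ and $\dot E$, build an $M$-generic filter $g$, and use the master condition $r(M,g)$. The paper dismisses clauses (1)--(3) as immediate; your extra verifications are fine, though your treatment of continuity is more elaborate than necessary, since your own ``freezing'' principle applied at $\ga$ already gives $C\cap\ga=\supp(s)\cap\ga$ for any $s\in G$ with $\ga\in\supp(s)$, so clause (3) of the definition of $Q$ applied to $s$ yields the required $\gg$ directly, with no need for the detour through $\hat x$ and $b_s$.
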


\begin{proof}
It is immediately clear that $\dom(F)$ is a closed unbounded subset of $\gw_1$ and that $F$ satisfies the continuity property. Thus, it will be enough to verify the reflection property. For this, return to the ground model, let $q\in Q$ and let $S\subset [H_{\omega_2}]^{\omega}$ be a stationary set. Let also $\dot E$ be a $Q$-name for a closed unbounded subset of $\gw_1$. It will be enough to find a condition $r\leq q$ and an ordinal $\ga\in\supp(r)$ such that $a_r(\ga)$ is the  transitive collapse of a model in $S$ and $r\Vdash\check\ga\in\dot E$.

To this end, use the stationarity of the set $S$ to find a countable elementary submodel $M$ of $H(\kappa)$ for some large enough regular cardinal $\kappa$, containing both $q$ and $\dot E$ such that $M\cap H_{\omega_2}\in S$. Find a filter $g\subset Q\cap M$ generic over $M$ containing the condition $q$, and let $r=r(M, g)$ and $\ga=M\cap\gw_1$. It is clear that $r\leq q$, $r\Vdash\check\ga\in\dot E$ since $r$ is a master condition for $M$, and $a_r(\ga)$ is a model isomorphic to $M\cap H_{\omega_2}\in S$.
\end{proof}

The proofs of Theorems \ref{posetQ} and \ref{mainthm-0-intro} are complete.

\section{Final Remarks}

We finish this article with a few open questions. The first of them is motivated by the size of the poset $Q$ used in Theorem \ref{posetQ} and the second one comes from the search for fragments of $\textsf{MM}$ implying (as $\textsf{PFA}$ does) that the continuum is equal to $\omega_2$.

\begin{question}
Does $\textsf{PFA}(\omega_1)$ imply $\textsf{MM}(\omega_1)$?.
\end{question}

\begin{question}
Does  $\textsf{MM}(\omega_1)$ imply $2^{\omega}=\omega_2$?.
\end{question}

\begin{question}
Does  $\textsf{MM}(\omega_1)$ imply that every stationary set preserving forcing of 
size $\omega_1$ is proper? Does this conclusion hold in the model constructed above?
\end{question}

In \cite{forcing-conseqs}, Asper\'o and Mota proved that the forcing axiom $\textsf{FA}(\Gamma, \omega_1)$, for $\Gamma$ being the class of all finitely proper posets of cardinality $\omega_1$, is consistent with the continuum being arbitrarily large. Very recently, Asper\'o and Golshani have improved that result by showing that $\textsf{PFA}(\omega_1)$ is also compatible with $2^{\omega}>\omega_2$ (see \cite{A-G}). Therefore, a positive answer for our first question would imply a negative answer for the second one.

\end{document}